\newtheorem{theorem}{Theorem}[section]
\newtheorem{lemma}{Lemma}[section]
\newtheorem{corollary}{Corollary}[section]
\theoremstyle{remark}
\theoremstyle{remark}
\begin{document}
\title[Radius of injectivety for harmonic mappings with fixed analytic part]{Radius of injectivety for harmonic mappings with fixed analytic part}

\author[Jugal Kishore Prajapat and Manivannan Mathi]{Jugal Kishore Prajapat and Manivannan Mathi}

\address{Manivannan Mathi, Department of Mathematics, Central University of Rajasthan, Bandarsindri, Kishangarh-305817, Dist.-Ajmer, Rajasthan, India.}
\email{manivannan.mathi91@gmail.com, jkprajapat@gmail.com}

\begin{abstract}
In this paper, we study non sense-preserving harmonic mappings $f=h+\overline{g}$ in $\mathbb{D}$ when its analytic part $h$ is convex and injective in $\mathbb{D}$ and obtain radius of injectivety.
\end{abstract}

\subjclass[2010]{30C45, 30C20, 31A05}
\keywords{Harmonic mappings, Univalent Functions, Convex functions, Radius of injectivety.}

\maketitle
\section{Introduction}
\setcounter{equation}{0}

A complex valued function $f$ is said to be harmonic in a domain $\Omega \subset \mathbb{C}$ if it satisfies $f_{z \overline{z}}(z)=0$ for all $z \in \Omega$. If $\Omega$ is simply connected then such functions can be represented as $f=h+\overline{g}$, \,where  $h$ and $g$ are analytic in $\Omega$. Furthermore, if $g(0)=0$, then this representation is unique.  Let $\mathcal{H}ar(\mathbb{D})$ denote the class of harmonic mappings $f$ in the open unit disk \linebreak $\mathbb{D}=\{z \in\mathbb{C} : |z|<1\}$ with the normalization $h(0)=h'(0)-1=0$ and $g(0)=0$. Such mappings $f$  are uniquely determined by the coefficients of power series
\begin{equation}\label{intro2}
h(z) = z + \sum_{n=2}^{\infty} a_nz^n, \quad \qquad g(z)=\sum_{n=1}^{\infty} b_nz^n  \qquad \qquad (z \in \mathbb{D}).
\end{equation}
Here $h$ is analytic and $g$ is co-analytic part of $f$. The Jacobian $J_f(z) $ \,of \,$f=h+\overline{g}  \in  \mathcal{H}ar(\mathbb{D})$ \,is \,$ J_f(z)   = |h'(z)|^2-|g'(z)|^2$. A function  $f \in  \mathcal{H}ar(\mathbb{D})$ is locally injective in $\mathbb{D}$ if and only if the Jacobian $J_f(z)$ is non-vanishing in $\mathbb{D},$ and sense-preserving if $J_f(z) >0$ in $\mathbb{D}$ (see \cite{lewy}). A harmonic mapping $f$ is said to be close-to-convex if $f(\mathbb{D})$ is close-to-convex, i.e., the complement of $f(\mathbb{D})$ can be written as disjoint union of non-intersecting half lines. The study of harmonic mappings have attracted the attention of complex analysts after Clunie and Sheil-Small \cite{clunie}. For recent results in harmonic  mappings, we refer to \cite{muhanna, Aa, zliu, D.} and the references therein. 
  
Let $\mathcal{H}ol(\mathbb{D})$ denote the class of holomorphic functions $f$ in $\mathbb{D}$ that are normalized by \linebreak $f(0)=f'(0)-1=0$ and $\mathcal{S}$ denote the subclass of  $\mathcal{H}ol(\mathbb{D})$ of injective holomorphic functions in $\mathbb{D}$. Note that  $\mathcal{H}ol(\mathbb{D})  \subset \mathcal{H}ar(\mathbb{D})$. Let  $\mathcal{K}$ denote the class of analytic functions $f \in \mathcal{H}ol(\mathbb{D})$ such that $f(\mathbb{D})$ is convex. It is well known that, convexity of analytic functions in $\mathbb{D}$ is a hereditary property; that is, if $f$ is convex in $\mathbb{D},$ then $f(\mathbb{D}_r)$ is convex for every $r \;(0<r<1),$ where $\mathbb{D}_r=\{z: |z|<r, \;0<r<1\}.$ An analytic function $\mathcal{H}ol(\mathbb{D})$ is said to be starlike function of order $\alpha \;(0 \leq \alpha <1),$ if $\Re(z f'(z)/f(z))> \alpha \; (z \in \mathbb{D})$. Let $\mathcal{B}$ denote the set of all analytic functions $w$ in $\mathbb{D}$ such that $|w(z)|\leq 1$ in $\mathbb{D}.$ A function $w \in \mathcal{B}$ satisfies the inequality  
\begin{equation}\label{11}
|w'(z)| \leq \frac{1-|w(z)|^2}{1-|z|^2}, \qquad z \in \mathbb{D},
\end{equation}
(see \cite[p. 168]{nehari}).  

The analytic parts of harmonic mappings are significant in shaping their geometric properties. For example, if $h$ is convex injective and  $f =h + \overline{g} \in \mathcal{H}ar(\mathbb{D})$ is sense-preserving, then $f(\mathbb{D})$ is close-to-convex \cite{clunie}. In (\cite{kanas1, prajapat}) harmonic mappings $f =h + \overline{g} \in \mathcal{H}ar(\mathbb{D})$ have been studied, where $|g'(0)|= \alpha \in [0,1),\; h$ is convex in one direction in $\mathbb{D}$ and the dilatation $w$ is given by $w(z) = (z + \alpha)/(1 + \alpha z).$ In \cite{bshouty1}, Bshouty {\it et al.} proved the following result of $f=h+\overline{g} \in \mathcal{H}ar(\mathbb{D})$ when $h$ is convex in $\mathbb{D}$.

\begin{lemma}\label{lemma1}
Let  $h$ be analytic and convex in $\mathbb{D}$. Then every harmonic mapping  $f=h+\overline{g}$ where $g'(z)=w(z) h'(z); \;|w(z)|<1$  is close-to-convex in $\mathbb{D}$.
\end{lemma}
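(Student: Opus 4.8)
The plan is to establish close-to-convexity through the classical tangent-angle criterion adapted to harmonic mappings: a locally univalent, sense-preserving harmonic mapping $f$ is close-to-convex provided that, for each $r \in (0,1)$, the argument of the tangent vector to the image curve $\theta \mapsto f(re^{i\theta})$ never decreases by as much as $\pi$ over any arc. Concretely, writing $z = re^{i\theta}$ and $T(\theta) = \tfrac{\partial}{\partial\theta} f(re^{i\theta})$, I would verify that $\arg T(\theta_2) - \arg T(\theta_1) > -\pi$ for all $\theta_1 < \theta_2$, and then let $r \to 1$.

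First I would record that $f$ is locally univalent and sense-preserving: since $g' = w h'$ with $|w| < 1$ and $h' \neq 0$ (convex functions have non-vanishing derivative), the Jacobian is $J_f = |h'|^2 - |g'|^2 = |h'|^2(1 - |w|^2) > 0$. Next I would compute the tangent vector. Differentiating $f(re^{i\theta}) = h(z) + \overline{g(z)}$ in $\theta$ and using $g' = wh'$ gives
\begin{equation*}
T(\theta) = iz h'(z) - i\bar z\,\overline{g'(z)} = iz h'(z)\left(1 - \frac{\bar z}{z}\,\overline{w(z)}\,\frac{\overline{h'(z)}}{h'(z)}\right).
\end{equation*}
The bracketed factor has the form $1 - \lambda$ with $|\lambda| = |w(z)| < 1$, so it lies in the open right half-plane and $\arg(1-\lambda) \in (-\pi/2,\,\pi/2)$; moreover $T(\theta) \neq 0$, so $\arg T$ is well defined and continuous along the circle.

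The key step is then to split $\arg T(\theta) = \arg(iz h'(z)) + \arg(1-\lambda)$ and analyze the two pieces separately. Because $h$ is convex, and convexity is hereditary so that $h$ is convex on each $\mathbb{D}_r$, the quantity $P(\theta) := \arg(iz h'(z))$ is the tangent angle of the convex curve $h(re^{i\theta})$ and satisfies $\tfrac{dP}{d\theta} = \Re\!\big(1 + zh''(z)/h'(z)\big) > 0$; hence $P$ is strictly increasing in $\theta$. The perturbation term $\arg(1-\lambda)$ remains confined to $(-\pi/2,\pi/2)$. Consequently, for $\theta_1 < \theta_2$,
\begin{equation*}
\arg T(\theta_2) - \arg T(\theta_1) = \big(P(\theta_2) - P(\theta_1)\big) + \big(\arg(1-\lambda(\theta_2)) - \arg(1-\lambda(\theta_1))\big) > -\pi,
\end{equation*}
since the first bracket is positive and the second exceeds $-\pi/2 - \pi/2 = -\pi$. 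This is precisely the required inequality, so $f(\mathbb{D}_r)$ is close-to-convex for each $r$, and letting $r \to 1$ shows $f(\mathbb{D})$ is close-to-convex.

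I expect the main obstacle to be the justification of the tangent-angle criterion in the harmonic (non-analytic) setting and the careful bookkeeping of the argument function: one must ensure $T$ does not vanish (guaranteed by sense-preservation), track $\arg T$ continuously around the circle, and confirm that the ``total backward turning less than $\pi$'' condition genuinely yields the geometric statement that the complement of $f(\mathbb{D})$ splits into non-crossing half-lines. A cleaner self-contained route to univalence, should the criterion be awkward to invoke directly, is the shearing argument: if $f(z_1) = f(z_2)$ and $a := h(z_1) - h(z_2)$, then convexity of $h$ lets one join $z_2$ to $z_1$ by the curve $z(t) = h^{-1}\big(h(z_2) + ta\big)$ with $h(z(t)) - h(z_2) = ta$, whence $g(z_1) - g(z_2) = a\int_0^1 w(z(t))\,dt$; combined with $a = -\overline{g(z_1) - g(z_2)}$ this forces $|a| \le |a|\int_0^1 |w|\,dt < |a|$, so $a = 0$ and $z_1 = z_2$ by univalence of the convex map $h$.
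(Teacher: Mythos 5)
The paper does not actually prove this lemma---it is quoted verbatim from Bshouty, Joshi and Joshi \cite{bshouty1}---so the comparison here is with the standard argument behind that citation. Your computations are all correct: $J_f=|h'|^2(1-|w|^2)>0$, the factorization $T(\theta)=izh'(z)(1-\lambda)$ with $|\lambda|=|w(z)|<1$, the fact that $\arg(1-\lambda)$ stays in $(-\pi/2,\pi/2)$, and $\tfrac{dP}{d\theta}=\Re\bigl(1+zh''(z)/h'(z)\bigr)>0$ by convexity. The genuine gap is the step you yourself flag as ``the main obstacle'': the Kaplan-type criterion for \emph{harmonic} mappings (backward turning of $\arg T$ less than $\pi$ on each circle implies close-to-convexity, plus the passage from $f(\mathbb{D}_r)$ to $f(\mathbb{D})$) is asserted, not proved, and it is not a routine transfer from the analytic case---it is essentially the whole content of the Clunie--Sheil-Small close-to-convexity machinery. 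The proof underlying the citation avoids this: one invokes the Clunie--Sheil-Small theorem that $f=h+\overline{g}$ is univalent and close-to-convex whenever it is locally univalent and $h+\epsilon g$ is close-to-convex for every $|\epsilon|=1$ (indeed, convexity of $h=h+0\cdot g$ already suffices in their Theorem 5.17), together with the one-line observation that $\Re\bigl[(h+\epsilon g)'/h'\bigr]=\Re(1+\epsilon w)>0$, so each $h+\epsilon g$ is close-to-convex by the classical criterion. Until you either prove the harmonic Kaplan criterion or replace it by this citation, the main route is incomplete.

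Your fallback shearing argument is correct as far as it goes, and it is worth noting that it is precisely the argument the authors give as Lemma 2.1 (taken from Chuaqui--Duren--Osgood): setting $\psi=g\circ h^{-1}$ and integrating $\psi'=w$ along the segment $[h(z_2),h(z_1)]$ in the convex domain $h(\mathbb{D})$ yields $|a|\le|a|\int_0^1|w|\,dt<|a|$ unless $a=0$. But this delivers only \emph{injectivity} of $f$, whereas the lemma asserts close-to-convexity in the sense defined in the paper (the complement of $f(\mathbb{D})$ is a union of non-crossing half-lines), which is strictly stronger and is the property actually used downstream. So neither of your two routes, as written, fully establishes the stated conclusion.
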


Note that, the harmonic mapping in Lemma 1.1 is sense-preserving. In this article, we consider the case of Lemma 1.1 when harmonic mapping $f=h+\overline{g}$\, is not necessarily sense-preserving in $\mathbb{D}$ but satisfies $g(z)=w(z) h(z) \;\;(w \in \mathcal{B})$. We observe that such harmonic mappings are not necessarily sense-preserving and injective in $\mathbb{D}$. For example, the harmonic mapping 
\begin{equation}\label{eq2}
f_1(z)=\frac{z}{1-z}-\overline{\frac{z}{2}}, \qquad z \in \mathbb{D},
\end{equation}
is not sense-preserving in $\mathbb{D}$ as $|g'(-1/2)/h'(-1/2)|=9/8>1$ and not injective in $\mathbb{D}$  (see $Figure \,1$).

\begin{figure*}[h]\label{fig1}
{\resizebox*{5.0cm}{!}{\includegraphics{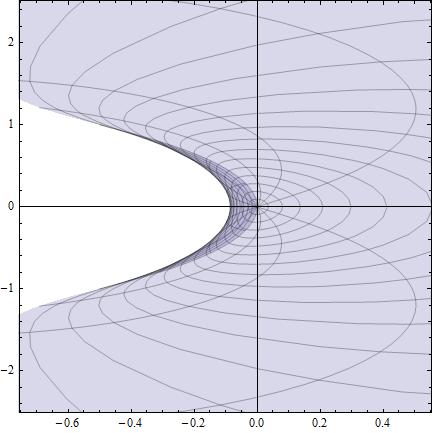}}}
\caption{\label{fig1} The images of $\mathbb{D}$ under $f_1$.}
\end{figure*}

\medskip
Recently, Ponnusamy and Kaliraj \cite{samy} proved the following result for \linebreak $f=h+\overline{g} \in \mathcal{H}ar(\mathbb{D})$ when $h$ is injective in $\mathbb{D}$.

\begin{lemma}\label{lemma2}
Suppose that $f=h+\overline{g} \in \mathcal{H}ar(\mathbb{D})$ is sense-preserving in $\mathbb{D}$ such that $h$ is injective in $\mathbb{D}$. Then the radius of injectivety and close-to-convexity of $f$ is $2-\sqrt{3}.$
\end{lemma}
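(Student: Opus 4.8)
The plan is to combine the classical radius of convexity for univalent functions with Lemma~\ref{lemma1}, and then to exhibit an extremal mapping for sharpness. Since $f=h+\overline{g}$ is sense-preserving, its Jacobian $J_f=|h'|^2-|g'|^2$ is positive throughout $\mathbb{D}$; in particular $h'(z)\neq 0$, so the dilatation $w=g'/h'$ is a well-defined analytic function with $|w(z)|<1$ on all of $\mathbb{D}$. Because $h$ is injective, we have $h\in\mathcal{S}$, and I would invoke the classical fact that every function in $\mathcal{S}$ is convex on the disk $\mathbb{D}_{2-\sqrt{3}}$ (the radius of convexity of $\mathcal{S}$), this bound being sharp and attained by the Koebe function. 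Thus $h$ is convex on $\mathbb{D}_{2-\sqrt{3}}$.

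Next I would rescale to bring the problem onto $\mathbb{D}$ so that Lemma~\ref{lemma1} applies directly. Fix $r<2-\sqrt{3}$ and set $h_r(z)=h(rz)/r$ and $g_r(z)=g(rz)/r$. Then $h_r$ is analytic and convex in $\mathbb{D}$, and a direct computation gives $g_r'(z)=w(rz)\,h_r'(z)$, where $|w(rz)|<1$ for $z\in\mathbb{D}$. Hence $f_r=h_r+\overline{g_r}$ satisfies the hypotheses of Lemma~\ref{lemma1} and is therefore close-to-convex in $\mathbb{D}$. Undoing the scaling, $f$ is close-to-convex, and in particular injective, on $\mathbb{D}_r$ for every $r<2-\sqrt{3}$. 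Letting $r\to 2-\sqrt{3}$ shows that the radius of injectivity and close-to-convexity of $f$ is at least $2-\sqrt{3}$.

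Finally comes the sharpness, which I expect to be the main obstacle. To show the radius is no larger than $2-\sqrt{3}$, I would take the extremal $h$ to be (a suitable rotation of) the Koebe function $k(z)=z/(1-z)^2$, which ceases to be convex exactly at $|z|=2-\sqrt{3}$, and pair it with a carefully chosen admissible dilatation $w$ (for instance $w(z)=z$, giving $g'(z)=z\,h'(z)$), so that the associated $f$ remains sense-preserving. The delicate part is to verify that this harmonic mapping actually fails to be injective, or fails close-to-convexity, on any disk of radius exceeding $2-\sqrt{3}$: this requires either exhibiting two distinct points of such a disk with the same image under $f$, or showing that the boundary argument underlying close-to-convexity breaks down there. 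Establishing this concrete failure, as opposed to the mere loss of convexity of $h$, which by itself does not force non-injectivity of $f$, is where the real work lies and is what pins down the constant $2-\sqrt{3}$ as exact.
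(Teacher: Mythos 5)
First, note that the paper does not prove Lemma~\ref{lemma2} at all: it is quoted from Ponnusamy and Kaliraj \cite{samy}, so there is no internal proof to compare against. The closest analogues inside the paper are Lemma~\ref{lemma3} and Theorem~\ref{thm4}, which treat the modified hypothesis $g=wh$ rather than $g'=wh'$. Your positive direction is sound and is essentially the same mechanism the paper uses there: sense-preservation gives an analytic dilatation $w=g'/h'$ with $|w|<1$, the radius of convexity of $\mathcal{S}$ makes $h$ convex on $\mathbb{D}_{2-\sqrt{3}}$, and either your rescaling trick combined with Lemma~\ref{lemma1} or the convex-domain integration argument of Lemma~\ref{lemma3} (whose proof only uses convexity of $h(\mathbb{D}_r)$) yields injectivity and close-to-convexity on $\mathbb{D}_r$ for every $r<2-\sqrt{3}$, hence on $\mathbb{D}_{2-\sqrt{3}}$.

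The genuine gap is the sharpness half, which you explicitly leave open, and without it the lemma's equality ``the radius \emph{is} $2-\sqrt{3}$'' is not established. Your candidate construction is also not obviously workable as stated: you must produce an $f=h+\overline{g}$ that is sense-preserving in \emph{all} of $\mathbb{D}$ (that is a hypothesis of the lemma, so the extremal cannot simply lose sense-preservation past $2-\sqrt{3}$ the way the extremals in Theorems~\ref{thm1} and~\ref{thm4} do) and yet identifies two points of every disk $\mathbb{D}_r$ with $r>2-\sqrt{3}$. Taking $h$ to be the Koebe function and $w(z)=z$ gives no such identification for free; as you yourself observe, loss of convexity of $h$ does not force non-injectivity of $f$, and the verification that two distinct points collide is precisely the content of the sharpness proof in \cite{samy}. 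Until that extremal example is exhibited and the collision of images is actually demonstrated, your argument only proves the one-sided statement that the radius of injectivity and close-to-convexity is \emph{at least} $2-\sqrt{3}$.
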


In this article, we consider the case of Lemma 1.2 when harmonic \linebreak mapping $f=h+\overline{g}$\, is not necessarily sense-preserving in $\mathbb{D}$ but satisfies $g(z)=w(z) h(z) \;(w \in \mathcal{B})$. We observe that such harmonic mappings are not necessarily sense-preserving and injective in $\mathbb{D}$. For example, the harmonic mapping  
\begin{equation}\label{eq4}
f_2(z)=\frac{z}{(1-z)^2}- \overline{\frac{z}{2(1-z)}},  \qquad z\in \mathbb{D},
\end{equation}
is not sense-preserving as $|g'(-1/2)/h'(-1/2)|=3/2>1$ and not injective in $\mathbb{D}$  (see $Figure \;2$).

\begin{figure}\label{fig2}
{\resizebox*{5.0cm}{!}{\includegraphics{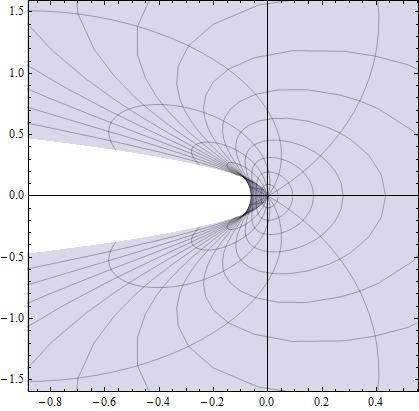}}}\hspace{4pt}
\caption{\label{fig2} The images of $\mathbb{D}$ under $f_2$.}
\end{figure}

\section{Main Results}
To prove our results, we shall  use  the following Lemma. This Lemma was appeared in \cite{chuaqui1} for $\mathbb{D}$, but observing it's proof, we see that this result is valid for all subdisk $\mathbb{D}_r.$ The proof of this special case is so short that we include it here for completeness.

\medskip
\noindent
\begin{lemma}\label{lemma3}
Let  $f=h+\overline{g} $ be a sense-preserving harmonic mapping in \linebreak $\mathbb{D}_r \,(0<r<1)$ and $h$ is injective convex in $\mathbb{D}$. Then $f$ is injective in $\mathbb{D}_r$.

\begin{proof}
Let \;$\Omega =h(\mathbb{D}_r), \; 0<r<1$. Define $\psi:\Omega \longrightarrow \mathbb{C}$ by $\psi(w)=g\circ h^{-1}(w).$ Then $\psi$ is analytic in convex domain $\Omega$ and  $\mathcal{\psi}'(w)= g'(w) /h'(w),$ where  $w=\mathcal{\psi}^{-1}(z)$ and $ |\mathcal{\psi}'(w)|<1.$ Now, let $z_1, z_2 \in \mathbb{D}_r, \;z_1\neq z_2$ such that  $f(z_1)=f(z_2),$ this is equivalent to $h(z_1)-h(z_2)=-(\overline{g(z_1)}-\overline{g(z_2)}).$ Set $w_1=h(z_1)$  and $ w_2=h(z_2)$ so that $w_1-w_2 = -(\overline{g(z_1)}-\overline{g(z_2)}). $ As $h^{-1}(w_1)=z_1$ and $ h^{-1}(w_2)=z_2,$ we have 
\begin{eqnarray*}
w_1-w_2 = -\left( \overline{g(h^{-1}(w_1))} \right) -\left( \overline{g(h^{-1}(w_2))} \right) = \overline{\psi(w_2)}-\overline{\psi(w_1)} .
\end{eqnarray*}
Because $\mathcal{\psi}$ is analytic on the convex domain $\Omega,$ we have $\overline{w_1}-\overline{w_2}=\int_{[w_1,w_2]}\mathcal{\psi}'(w) \, dw,$ which  is not possible as $|\mathcal{\psi}'(w)|<1$ in $\Omega.$ Thus $f(z_1)\neq f(z_2)$. This shows the injectivety of $f$ in $\mathbb{D}_r.$
\end{proof}
\end{lemma}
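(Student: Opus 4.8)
The plan is to argue by contradiction, transporting the problem from the disk $\mathbb{D}_r$ to the image domain $\Omega = h(\mathbb{D}_r)$ via the conformal map $h$. First I would record the two structural consequences of the hypotheses. Since $h$ is convex and injective in $\mathbb{D}$, the hereditary property of convexity noted in the introduction guarantees that $\Omega = h(\mathbb{D}_r)$ is a convex domain; and since $h$ is injective with $h' \neq 0$, the inverse $h^{-1}:\Omega \to \mathbb{D}_r$ is a well-defined conformal map. This lets me introduce the auxiliary function $\psi := g\circ h^{-1}$, which is analytic on $\Omega$ as a composition of analytic maps.

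Next I would extract the role of the sense-preserving hypothesis. By the chain rule, $\psi'(w) = g'(z)/h'(z)$ with $z = h^{-1}(w)$, so the condition $J_f = |h'|^2 - |g'|^2 > 0$ on $\mathbb{D}_r$ is exactly $|\psi'(w)| < 1$ for all $w \in \Omega$. This dilatation bound is the only quantitative input the proof needs.

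For the contradiction, suppose $z_1 \neq z_2$ in $\mathbb{D}_r$ satisfy $f(z_1) = f(z_2)$; equivalently $h(z_1) - h(z_2) = \overline{g(z_2)} - \overline{g(z_1)}$. Writing $w_j = h(z_j)$ (distinct, since $h$ is injective) and using $g(z_j) = \psi(w_j)$, this becomes $w_1 - w_2 = \overline{\psi(w_2) - \psi(w_1)}$. Because $\Omega$ is convex, the segment $[w_1, w_2]$ lies in $\Omega$, so I can evaluate $\psi(w_1) - \psi(w_2) = \int_{[w_2,w_1]} \psi'(w)\, dw$ along this segment. Conjugating the displayed relation, comparing with this integral, and dividing by $|w_1 - w_2| \neq 0$, gives $1 = \left|\int_0^1 \psi'\big(w_2 + t(w_1-w_2)\big)\, dt\right| \le \int_0^1 |\psi'|\, dt < 1$, a contradiction.

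The only delicate point — and the reason convexity of $h$ (not merely injectivity) is assumed — is that the integration must be carried out along a path on which $\psi$ is genuinely defined and analytic. Convexity of $\Omega$ is precisely what forces the straight segment $[w_1, w_2]$ to remain inside $\Omega$, so that the fundamental theorem of calculus applies and the dilatation bound $|\psi'| < 1$ can be invoked pointwise along it. Everything else is a routine estimate.
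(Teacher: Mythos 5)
Your proposal is correct and follows essentially the same route as the paper: pull back to $\Omega=h(\mathbb{D}_r)$, set $\psi=g\circ h^{-1}$ with $|\psi'|<1$ from the sense-preserving hypothesis, and derive a contradiction by integrating $\psi'$ along the segment $[w_1,w_2]$, which lies in $\Omega$ by convexity. Your write-up is in fact cleaner than the paper's (which contains sign and notation slips in the displayed identity and in the chain-rule formula), but there is no substantive difference in the argument.
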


\medskip
First we prove the following sharp result for harmonic mappings with injective and convex analytic part. 

\noindent
\begin{theorem}\label{thm1}
Let $f=h+\overline{g} \in \mathcal{H}ar(\mathbb{D})$ such that $h$ is injective and convex in $\mathbb{D}$ and $g(z)=w(z)h(z)$, where $w \in \mathcal{B}$. Then $f$ is sense-preserving and injective in $\mathbb{D}_{1 / 3}$. The results is sharp. 
\end{theorem}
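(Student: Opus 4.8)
The plan is to reduce the whole statement to a pointwise estimate on the dilatation $g'/h'$. Since $f$ is sense-preserving exactly where $J_f=|h'|^2-|g'|^2>0$, it suffices to prove $|g'(z)/h'(z)|<1$ for $|z|<1/3$. Differentiating the relation $g=wh$ gives $g'=w'h+wh'$, so that
\[
\frac{g'(z)}{h'(z)}=w(z)+w'(z)\,\frac{h(z)}{h'(z)},
\]
and the triangle inequality together with the Schwarz--Pick estimate \eqref{11} yields
\[
\left|\frac{g'(z)}{h'(z)}\right|\le |w(z)|+\frac{1-|w(z)|^2}{1-|z|^2}\left|\frac{h(z)}{h'(z)}\right|.
\]
The key remaining input is a \emph{sharp} bound on $|h/h'|$ for convex $h$. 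I would record that a convex function is starlike of order $1/2$, i.e. $\Re\big(zh'(z)/h(z)\big)>1/2$ (Marx--Strohh\"acker); writing $q=zh'/h$ one has $q(0)=1$ and $q$ maps $\mathbb D$ into $\{\Re w>1/2\}$, whose image under $w\mapsto 1/w$ is the disc $|w-1|<1$. Hence $\zeta=1/q$ satisfies $\zeta(0)=1$ and $|\zeta-1|<1$, so $\zeta-1$ is a self-map of $\mathbb D$ fixing the origin, and the Schwarz lemma gives $|\zeta(z)-1|\le|z|$. This yields the sharp estimate $|h(z)/h'(z)|\le |z|(1+|z|)$, attained by $h(z)=z/(1-z)$ at $z=-|z|$.

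Substituting this, and writing $r=|z|$, $t=|w(z)|$, the bound simplifies (using $1-r^2=(1-r)(1+r)$) to $|g'/h'|\le \phi_r(t)$ with $\phi_r(t)=t+\frac{r(1-t^2)}{1-r}$. A short computation shows that, for $t\in[0,1)$, the inequality $\phi_r(t)<1$ is equivalent to $r<1/(2+t)$; since $1/(2+t)>1/3$ for every $t<1$, the estimate $|g'/h'|<1$ holds throughout $\mathbb{D}_{1/3}$ (a non-constant $w\in\mathcal B$ has $|w|<1$ inside, and the constant case is immediate). Thus $f$ is sense-preserving in $\mathbb{D}_{1/3}$, and applying Lemma~\ref{lemma3} on each subdisc $\mathbb D_r$ with $r<1/3$ gives injectivity of $f$ in $\mathbb{D}_{1/3}$.

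For sharpness I would take $h(z)=z/(1-z)$, for which $h/h'=z(1-z)$ realizes $|h/h'|=r(1+r)$ at $z=-r$, and pair it with a disc automorphism $w_\alpha(z)=(z-\alpha)/(1-\alpha z)$, $0<\alpha<1$, which attains equality in \eqref{11} at every point. The maximizer of $\phi_r$ is $t^{*}=(1-r)/(2r)$, and one checks $t^{*}\in(r,1)$ precisely for $r\in(1/3,1/2)$; choosing $\alpha$ so that $|w_\alpha(-r)|=t^{*}$ makes both terms of $g'/h'$ real and aligned at $z=-r$, so that $|g'(-r)/h'(-r)|=\phi_r(t^{*})>1$ and hence $J_f(-r)<0$, while $J_f(0)=1-\alpha^2>0$. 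Since an injective harmonic mapping has a Jacobian of constant sign (Lewy), this sign change forces $f$ to be non-injective in $\mathbb D_\rho$ for every $\rho>1/3$; letting $r\downarrow 1/3$ shows that $1/3$ cannot be enlarged.

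The main obstacle is obtaining the exact constant. The crude global bound $|h/h'|<2r$ (from $\Re(zh'/h)>1/2$ without the normalization) would only deliver the smaller radius $\sqrt5-2$; pinning down $1/3$ relies on the radius-dependent estimate $|h/h'|\le r(1+r)$ and on tracking the extremal value $t\to1$ in the optimization of $\phi_r$. The sharpness step is the second delicate point, since equality cannot occur at an interior point of $\mathbb{D}_{1/3}$ itself and must instead be exhibited for radii arbitrarily close to $1/3$ from above, which is why the automorphism family $w_\alpha$ is needed.
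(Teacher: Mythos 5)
Your argument is correct and follows essentially the same route as the paper: reduce everything to sense-preservation via Lemma~\ref{lemma3}, bound $|h(z)/h'(z)|\le |z|(1+|z|)$ using the fact that a convex function is starlike of order $1/2$, combine with the Schwarz--Pick estimate \eqref{11} to get the same function $\phi_r(t)=t+\tfrac{r(1-t^2)}{1-r}$, and exhibit sharpness with $h(z)=z/(1\mp z)$ paired with a disc-automorphism dilatation (your $z=-r$ configuration is the paper's $z=r$ one after the reflection $z\mapsto -z$). If anything, your sharpness step is slightly more complete than the paper's: the paper only verifies that sense-preservation fails beyond radius $1/3$, whereas you also use the sign change of the Jacobian together with Lewy's theorem to conclude that injectivity itself fails.
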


\begin{proof} As convexity is hereditary property, $h$ is injective and convex in $D_{1/3}$. Thus, in view of Lemma \ref{lemma3}, it is sufficient to prove that $f$ is sense-preserving in $\mathbb{D}_{1/3}$. A convex function is starlike of order $1/2$ \; \cite[Theorem 2.3.2]{graham}. Hence
$$\frac{z h'(z)}{h(z)}=\frac{1}{1-w(z)},$$
where $w \in \mathcal{B}$. This gives $|h(z)|\leq |z| \left( 1+|z|\right) |h'(z)| \;\; (z \in \mathbb{D}).$ Using this inequality and (\ref{11}), we have
\begin{eqnarray*}
 |g'(z)| & \leq & |w(z)| |h'(z)|+ |h(z)| |w'(z)| \\
 & = &  \left( |w(z)|+\frac{|z|(1-|w(z)|^2)}{1-|z|}  \right) |h'(z)|.
\end{eqnarray*}
If $|z|<\dfrac{1}{3},$ then 
\begin{eqnarray*}
|w(z)|+\frac{|z|(1-|w(z)|^2)}{1-|z|} &< &\frac{1}{2} (2|w(z)|+1-|w(z)|^2)\\
&\leq& \frac{1}{2} (2|w(z)|+2(1-|w(z)|)) =1.
\end{eqnarray*}
Therefore $|g'| < |h'|$, hence $f$ is sense-preserving in $\mathbb{D}_{1/3}.$ To show the sharpness, let
\[h(z)=\frac{z}{1+z}   \quad  {\rm and }  \quad w(z)=\frac{z+\zeta}{1+\zeta z} \]
where $\zeta \in [-1, 1].$ We deduce that $g'(r)= U(r, \zeta) h'(r) $, where
$$U(r, \zeta) =\frac{r+\zeta}{1+r \zeta}+\frac{r(1+r)(1-\zeta^2)}{(1+r \zeta)^2}.$$
Note that $U(r,1) = 1$ and 
$$\left. \frac{\partial }{\partial \zeta} \,U(r, \zeta)\right|_{\zeta=1}=\frac{1-3r}{1+r}.$$
Choose $r$ such that $r \in [1/3, 1)$, then $\left. \frac{\partial }{\partial \zeta} U(r, \zeta)\right|_{\zeta=1} \leq 0$. Hence $U(r,1-\varepsilon)>1$  for each $\epsilon >0$. This gives $ g'(r)> h'(r)> 0,$  for each $r \in [1/3, 1)$. Thus $f$ is not sense-preserving in $|z|<r$ if $r >1/3.$ This complete the proof. 
\end{proof}

\medskip
If dilatation $w$ has the form $w(z)=e^{i \theta} z^n \; \;(\theta \in \mathbb{R}, n \geq 1)$ and $w(z)=c \;\;(c \in \mathbb{C}, |c|<1)$, then we have

\medskip
\noindent
\begin{corollary}\label{cor1}
Let $f=h+\overline{g} \in \mathcal{H}ar(\mathbb{D})$ such that $h$ is injective and convex in $\mathbb{D}$ and $g(z)= e^{i \theta} z^n h(z) \;\;(\theta \in \mathbb{R}, n \geq 1)$, then $f$ is injective in $\mathbb{D}_{r_{n, 1}},$ where $r_{n, 1}$ is the unique root of $\;n \,r^{n+1}+(n+1)r^n-1= 0$ in the interval $(0, 1)$. The constant $r_{n, 1}$ cannot be improved. The constant $r_{n, 2}$ cannot be improved.
\begin{center}    
\begin{tabular}{llllll}
\hline\noalign{\smallskip}
$n$ \quad \vline & 1 & \quad 2 \quad & \quad 3 & \quad 4 & \quad 5\\
\noalign{\smallskip}\hline\noalign{\smallskip}
$r_{n, 1}$ \vline & $\approx 0.414 $ & $\quad 0.5$ & $\;\; \approx 0.5604 $ & $\approx 0.6058 $ & $\approx 0.6415 $    \\
\noalign{\smallskip}\hline
\end{tabular}
\end{center}

\begin{proof}
From the hypothesis $g(z)=e^{i \theta} z^n h(z)$, we obtain  
\begin{eqnarray*}
|g'(z)| &=& \left| n e^{i \theta} z^{n-1} \frac{h(z)}{h'(z)} +e^{i \theta} z^n \right| \\
& \leq & (n |z|^{n+1}+(n+1)|z|^n)|h'(z)|.
\end{eqnarray*}
Hence, $|g'(z)| <|h'(z)|$  \,if \, $n |z|^{n+1}+(n+1)|z|^n \leq 1.$ Thus, $f$ is sense-preserving in $\mathbb{D}_{r_{n, 1}},$ where $r_{n, 1}$ is the unique root of $n \,r^{n+1}+(n+1) r^n-1=0$ in the interval $(0, 1)$. 
\end{proof}
\end{corollary}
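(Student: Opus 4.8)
The plan is to read the corollary as a direct specialization of Theorem \ref{thm1}. Since $w(z)=e^{i\theta}z^n$ satisfies $|w(z)|=|z|^n<1$ on $\mathbb{D}$, it lies in $\mathcal{B}$, and the hypothesis $g=wh$ is exactly the one treated there. Consequently, by the hereditary convexity of $h$ together with Lemma \ref{lemma3}, it again suffices to locate the largest subdisk on which $f$ is sense-preserving, i.e. on which $|g'(z)|<|h'(z)|$.

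First I would differentiate the factored form to get $g'(z)=e^{i\theta}\bigl(nz^{n-1}h(z)+z^{n}h'(z)\bigr)$ and factor out $h'(z)$. The starlike-of-order-$1/2$ estimate already exploited in Theorem \ref{thm1}, namely $|h(z)|\le|z|(1+|z|)\,|h'(z)|$, then feeds into the triangle inequality to yield $|g'(z)|\le\bigl(n|z|^{n+1}+(n+1)|z|^{n}\bigr)|h'(z)|$. Hence $f$ is sense-preserving wherever $P_n(|z|)<0$, where I set $P_n(r):=nr^{n+1}+(n+1)r^{n}-1$.

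Next I would settle the root structure of $P_n$ on $(0,1)$. Since $P_n(0)=-1<0$, $P_n(1)=2n>0$, and $P_n'(r)=n(n+1)r^{n-1}(r+1)>0$ on $(0,1)$, the polynomial is strictly increasing with a single sign change, so it has a unique zero $r_{n,1}\in(0,1)$. For $|z|<r_{n,1}$ we obtain $|g'|<|h'|$, and Lemma \ref{lemma3} then delivers injectivity on $\mathbb{D}_{r_{n,1}}$.

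The sharpness is the step I expect to be the crux, since it requires an extremal function that turns every inequality above into an equality simultaneously. I would take $h(z)=z/(1+z)$, the half-plane map (convex and injective), for which the order-$1/2$ bound $|h(z)/h'(z)|\le|z|(1+|z|)$ becomes an equality along the positive real axis, and $\theta=0$, so that $w(z)=z^{n}$ aligns all phases on that axis. A short computation should give $g'(r)=\bigl(nr^{n+1}+(n+1)r^{n}\bigr)h'(r)=(P_n(r)+1)\,h'(r)$ with $h'(r)>0$, whence $g'(r)>h'(r)>0$ precisely when $P_n(r)>0$, i.e. for every $r>r_{n,1}$; there $J_f(r)<0$ and $f$ fails to be sense-preserving on any larger disk. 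The delicate point is verifying that $h(z)=z/(1+z)$ forces the starlikeness bound to be tight (its associated Schwarz function is $-z$, which saturates $|w_h(z)|\le|z|$) at the same place where the real choice $\theta=0$ saturates the triangle inequality; it is this joint saturation that makes $r_{n,1}$ genuinely unimprovable rather than an artifact of the estimates.
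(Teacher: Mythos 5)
Your derivation of the sense-preserving radius is essentially identical to the paper's: differentiate $g=e^{i\theta}z^nh$, invoke the starlike-of-order-$1/2$ bound $|h(z)|\le |z|(1+|z|)|h'(z)|$ from Theorem \ref{thm1}, obtain $|g'(z)|\le (n|z|^{n+1}+(n+1)|z|^n)|h'(z)|$, and conclude via Lemma \ref{lemma3}; your monotonicity check of $P_n$ is a small (correct) addition the paper omits. You go beyond the paper on sharpness: the printed proof of the corollary never justifies the claim that $r_{n,1}$ cannot be improved, whereas your pair $h(z)=z/(1+z)$, $w(z)=z^n$ does yield $g'(r)=(nr^{n+1}+(n+1)r^n)h'(r)>h'(r)>0$ for $r>r_{n,1}$, so the sense-preserving radius is genuinely $r_{n,1}$. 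The one caveat, which also afflicts the paper's own sharpness arguments in Theorems \ref{thm1} and \ref{thm4}, is that losing sense-preservation at some point does not by itself disprove injectivity on the larger disk; if the corollary's unimprovability claim is read as being about injectivity rather than about the method, an additional argument (e.g.\ exhibiting two points with equal images, or using a degree/Jacobian sign-change argument) would still be needed.
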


\medskip
\noindent
\begin{corollary}\label{cor2}
Let $f=h+\overline{g} \in \mathcal{H}ar(\mathbb{D})$ such that $h$ is injective and convex in $\mathbb{D}$ and $g(z)=c \,h(z) \;\;(c \in \mathbb{C}, |c|<1)$, then $f$ is sense-preserving and close-to-convex in $\mathbb{D}$.

\begin{proof}
Note that $f$ is sense-preserving in $\mathbb{D}$, hence in view of Lemma 1.1, $f$ is close-to-convex in $\mathbb{D}$.
\end{proof}
\end{corollary}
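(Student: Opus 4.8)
The plan is to exploit the fact that when $g = c\,h$ with $c$ constant, the complex dilatation of $f$ reduces to a constant of modulus strictly less than one, which makes both conclusions almost immediate. First I would differentiate the relation $g(z) = c\,h(z)$ to obtain $g'(z) = c\,h'(z)$, so that the dilatation is $\omega(z) = g'(z)/h'(z) = c$ for every $z \in \mathbb{D}$. Here $h'$ is nonvanishing because $h$ is univalent, so $\omega$ is well defined, and in particular $|\omega(z)| = |c| < 1$ throughout $\mathbb{D}$.

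For sense-preservation I would compute the Jacobian directly. Since $g'(z) = c\,h'(z)$,
$$J_f(z) = |h'(z)|^2 - |g'(z)|^2 = (1-|c|^2)\,|h'(z)|^2 .$$
As $|c| < 1$ gives $1 - |c|^2 > 0$, and $h'(z) \neq 0$, we conclude $J_f(z) > 0$ on all of $\mathbb{D}$, so $f$ is sense-preserving there.

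For close-to-convexity I would invoke Lemma \ref{lemma1}. Because $h$ is convex in $\mathbb{D}$ and $g'(z) = c\,h'(z)$ with $|c| < 1$, the hypotheses of Lemma \ref{lemma1} are met with the (constant) dilatation $w(z) \equiv c$, $|w(z)| < 1$. Hence $f$ is close-to-convex in $\mathbb{D}$, as required.

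There is essentially no obstacle in this corollary, and that is precisely the point worth highlighting: in Theorem \ref{thm1} and Corollary \ref{cor1} the factor $w(z)h(z)$ or $e^{i\theta}z^n h(z)$ makes $|\omega(z)|$ grow toward the boundary, forcing only a finite radius of injectivity; by contrast, the constant multiplier $c$ pins $|\omega| \equiv |c| < 1$ uniformly, so sense-preservation holds on the whole disk and Lemma \ref{lemma1} then upgrades this to close-to-convexity on all of $\mathbb{D}$.
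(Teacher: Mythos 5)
Your proposal is correct and follows essentially the same route as the paper: observe that $g'(z)=c\,h'(z)$ forces the dilatation to be the constant $c$ with $|c|<1$, conclude sense-preservation, and then apply Lemma \ref{lemma1}. The only difference is that you spell out the Jacobian computation that the paper leaves implicit in its one-line remark.
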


\medskip
Now we prove the following sharp result for harmonic mappings with injective analytic part. 

\medskip
\noindent
\begin{theorem} \label{thm4}
Let $f=h+\overline{g}\in \mathcal{H}ar(\mathbb{D})$ such that $h$ is injective in $\mathbb{D}$ and $g(z) =w(z)h(z)$, where $w \in \mathcal{B}$. Then $f$ is sense-preserving and injective in $\mathbb{D}_{2-\sqrt{3}}$. The result is sharp. 

\begin{proof}
It is well known that the radius of convexity for the class $\mathcal{S}$ is $2-\sqrt{3}$  (see \cite[Theorem 2.2.22]{graham}). Thus, in view of Lemma \ref{lemma3}, it is sufficient to prove that $f$ is sense-preserving in $\mathbb{D}_{2-\sqrt{3}}.$ For $h \in \mathcal{S}$, we have 
$$|h(z)|\leq \frac{|z|(1+|z|)}{1-|z|}|h'(z)|,  \qquad z \in \mathbb{D},$$
(see\, \cite[Theorem 1.1.6]{graham}). Using this inequality and (\ref{11}), we have
\begin{equation}
|g'(z)|\leq \left( |w(z)|+\frac{|z|(1-|w(z)|^2)}{(1-|z|)^2}\right)|h'(z)|.
\end{equation}
If $|z|^2-4|z|+1>0,$ then 
\[|w(z)|+\frac{  |z|\left( 1-|w(z)|^2\right) }{(1-|z|)^2} < 1,\] 
hence $|g'|<|h'|$ in $\mathbb{D}$. Therefore, $f$ is sense-preserving in a disk $\mathbb{D}_r$, where $r$ is unique root of $r^2-4r+1=0$ in the interval  $(0, 1)$. This shows that $f$ is sense-preserving  in $\mathbb{D}_{2-\sqrt{3}}$. To show the sharpness, let
\[h(z)=\frac{z}{(1+z)^2}   \quad {\rm and}  \quad w(z)=\frac{z+\zeta}{1+\zeta z}, \]
where $\zeta \in [-1, 1].$ A computation gives $g'(r)=V(r, \zeta) h'(r)$, where
\begin{eqnarray*}
V(r, \zeta)= \frac{r+\zeta}{1+r \zeta}+\frac{1-\zeta^2}{(1+r \zeta)^2}\frac{r(1+r)}{1-r}.
\end{eqnarray*}
Note that $V(r,1) = 1$ and 
\[\left. \frac{\partial }{\partial \zeta} \,V(r, \zeta) \right|_{\zeta=1}=\frac{1-4r+r^2}{1-r^2}.\]
Choose $r$ such that $r \in [2-\sqrt{3}, 1),$ then  $\left. \frac{\partial }{\partial \zeta} \,V(r, \zeta) \right|_{\zeta=1} <0.$ Therefore $V(r,1-\epsilon)>1$ for $\epsilon >0$. This shows that $ g'(r)> h'(r)> 0$ for $r \in (2-\sqrt{3}, 1)$. Thus $f$ is not injective in $|z|<r$ if $r >2-\sqrt{3}.$ This complete the proof. 
\end{proof}
\end{theorem}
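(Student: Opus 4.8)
The plan is to follow the architecture of Theorem \ref{thm1}, weakening the convexity hypothesis on $h$ to mere injectivity and compensating by working inside the radius of convexity of $\mathcal{S}$. Since $h \in \mathcal{S}$ and $2-\sqrt{3}$ is the radius of convexity for $\mathcal{S}$, the image $h(\mathbb{D}_{2-\sqrt{3}})$ is a convex domain; this is exactly the ingredient that the proof of Lemma \ref{lemma3} actually uses (through the convexity of $\Omega=h(\mathbb{D}_r)$), so once $f$ is shown to be sense-preserving on $\mathbb{D}_{2-\sqrt{3}}$, injectivity there follows immediately from that lemma. Thus the whole matter reduces to verifying the dilatation inequality $|g'(z)|<|h'(z)|$ on $\mathbb{D}_{2-\sqrt{3}}$.

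To obtain that inequality, I would write $g=wh$, so that $g'=w'h+wh'$ and hence $|g'(z)|\le |w(z)|\,|h'(z)|+|w'(z)|\,|h(z)|$. Two classical estimates then enter: the distortion bound for $\mathcal{S}$, $|h(z)|\le \frac{|z|(1+|z|)}{1-|z|}|h'(z)|$, and the Schwarz--Pick inequality (\ref{11}), $|w'(z)|\le \frac{1-|w(z)|^2}{1-|z|^2}$. Substituting and using $1-|z|^2=(1-|z|)(1+|z|)$ collapses the bound to
\[
|g'(z)|\le\left(|w(z)|+\frac{|z|\,(1-|w(z)|^2)}{(1-|z|)^2}\right)|h'(z)|,
\]
so everything rests on showing that the bracketed factor is strictly below $1$.

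Setting $s=|w(z)|$ and $\rho=|z|$, I would reduce $s+\frac{\rho(1-s^2)}{(1-\rho)^2}<1$, after dividing by $1-s>0$ (note $s<1$ by the maximum principle, since a unimodular constant $w$ is the degenerate sense-reversing case), to the clean inequality $\rho(1+s)<(1-\rho)^2$. Because $s<1$ forces $\rho(1+s)<2\rho$, it suffices to have $2\rho\le(1-\rho)^2$, i.e. $\rho^2-4\rho+1\ge 0$, which holds precisely for $\rho\le 2-\sqrt{3}$. This fixes the radius and completes the sense-preserving part.

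The step I expect to be the real obstacle is the sharpness. Here the idea is to test the two estimates simultaneously at their extremizers: take $h(z)=z/(1+z)^2$, the rotated Koebe function extremal for the distortion theorem, and the disk automorphism $w(z)=(z+\zeta)/(1+\zeta z)$ with $\zeta\in[-1,1]$, which saturates Schwarz--Pick on the real axis. A direct computation then yields $g'(r)=V(r,\zeta)h'(r)$ with $V(r,1)=1$, and the decisive point is the sign of the boundary derivative $\left.\partial_\zeta V(r,\zeta)\right|_{\zeta=1}=(1-4r+r^2)/(1-r^2)$, which is strictly negative for $r>2-\sqrt{3}$. That negativity gives $V(r,1-\varepsilon)>1$ for small $\varepsilon>0$, whence $|g'(r)|>|h'(r)|$ and $f$ fails to be sense-preserving, hence fails to be locally injective, on every disk of radius larger than $2-\sqrt{3}$. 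The subtlety lies in identifying this one-parameter family and in correctly evaluating the derivative at $\zeta=1$; the remaining algebra is routine.
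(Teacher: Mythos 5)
Your proposal is correct and follows essentially the same route as the paper: reduction to sense-preservation via Lemma \ref{lemma3} together with the radius of convexity $2-\sqrt{3}$ for $\mathcal{S}$, the same distortion and Schwarz--Pick estimates yielding the same bound on $|g'|/|h'|$, and the identical sharpness family $h(z)=z/(1+z)^2$, $w(z)=(z+\zeta)/(1+\zeta z)$. Your explicit verification that the bracketed factor is below $1$ (reducing to $\rho(1+s)<(1-\rho)^2$) is merely a cleaner spelling-out of the inequality the paper asserts.
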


If dilatation $w$ has the form $w(z)= e^{i \theta} z^n \; (\theta \in \mathbb{R}, n \geq 1)$, then we have

\medskip
\noindent
\begin{corollary}\label{cor3}
Let $f =h+\overline{g} \in \mathcal{H}ar(\mathbb{D})$ such that $h$ is injective in $\mathbb{D}$ and \linebreak $g(z)= e^{i \theta} z^n h(z) \;(\theta \in \mathbb{R}, n \geq 1)$, then $f$ is injective in $\mathbb{D}_{r_{n,2}},$ where $r_{n, 2}$ is the unique root of $(n-1)r^{n+1}+(n+1)r^n+r-1= 0$ in the interval $(0, 1)$. The constant $r_{n, 2}$ cannot be improved.
\begin{center}    
\begin{tabular}{llllll}
\hline\noalign{\smallskip}
$n$ \quad \vline & 1 & \quad 2 \quad & \quad 3 & \quad 4 & \quad 5\\
\noalign{\smallskip}\hline\noalign{\smallskip}
$r_{n, 2}$ \vline & $\approx 0.333 $ & $\approx 0.414$ & $\;\; \approx 0.4738$ & $\approx 0.5201 $ & $\approx 0.5574 $    \\
\noalign{\smallskip}\hline
\end{tabular}
\end{center}

\begin{proof}
We have 
\begin{eqnarray*}
|g'(z)| &=& \left| n e^{i \theta} z^{n-1} \frac{h(z)}{h'(z)} +e^{i \theta} z^n \right| \\
& \leq & \left( n |z|^{n-1} \frac{|z|(1+|z|)}{1-|z|}+|z|^n \right) |h'(z)|.
\end{eqnarray*}
Hence, $|g'(z)| <|h'(z)|$  \,if $(n-1)|z|^{n+1}+(n+1)|z|^n+|z|-1 \geq 0.$ Thus, $f$ is sense-preserving in $\mathbb{D}_{r_{n, 2}},$ where $r_{n, 2}$ is the unique root of $(n-1) r^{n+1}+(n+1) r^n+r-1=0$ in the interval $(0, 1)$.  
\end{proof}

\end{corollary}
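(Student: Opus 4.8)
The plan is to follow the architecture of Theorem~\ref{thm4}: separate the claim into a sense-preserving estimate, which fixes the radius $r_{n,2}$, and an injectivity step supplied by Lemma~\ref{lemma3}. Since $h\in\mathcal S$, I would first use the growth bound $|h(z)|\le \frac{|z|(1+|z|)}{1-|z|}|h'(z)|$ (\cite[Theorem 1.1.6]{graham}). Differentiating $g(z)=e^{i\theta}z^{n}h(z)$ gives $g'(z)=e^{i\theta}h'(z)\bigl(nz^{n-1}\tfrac{h(z)}{h'(z)}+z^{n}\bigr)$, so
\[
\Bigl|\tfrac{g'(z)}{h'(z)}\Bigr|\le n|z|^{n-1}\tfrac{|z|(1+|z|)}{1-|z|}+|z|^{n}
   =\frac{(n+1)|z|^{n}+(n-1)|z|^{n+1}}{1-|z|}.
\]
Demanding this be $<1$ is exactly $(n-1)|z|^{n+1}+(n+1)|z|^{n}+|z|-1<0$; the polynomial $P_n(r)=(n-1)r^{n+1}+(n+1)r^{n}+r-1$ is strictly increasing on $(0,1)$ with $P_n(0)=-1$ and $P_n(1)=2n$, hence has a unique root $r_{n,2}\in(0,1)$, and $f$ is sense-preserving precisely on $\mathbb D_{r_{n,2}}$. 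For sharpness I would take the extremal pair $h(z)=z/(1+z)^{2}$, $w(z)=z^{n}$ (so $\theta=0$). Then $h(z)/h'(z)=z(1+z)/(1-z)$, and along the positive real axis
\[
\frac{g'(r)}{h'(r)}=r^{n}\frac{(n+1)+(n-1)r}{1-r},
\]
which increases from $0$ to $+\infty$ and equals $1$ exactly at $r=r_{n,2}$. Thus $J_f(r_{n,2})=|h'(r_{n,2})|^{2}\bigl(1-|g'/h'|^{2}\bigr)=0$ with $J_f>0$ just inside; the Jacobian vanishes on $|z|=r_{n,2}$, so $f$ is not locally injective there and cannot be injective on any $\mathbb D_r$ with $r>r_{n,2}$. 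This gives the upper bound $r_{n,2}$ for the radius.

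The hard part is the matching lower bound, i.e.\ genuine injectivity on all of $\mathbb D_{r_{n,2}}$, and here I expect the real obstacle. Lemma~\ref{lemma3} converts ``sense-preserving'' into ``injective'' only on a subdisk where $h$ is convex, since its proof integrates $\psi=g\circ h^{-1}$ along the segment $[w_1,w_2]\subset h(\mathbb D_r)$ and thus needs $h(\mathbb D_r)$ convex. For $h\in\mathcal S$ convexity holds only up to the convexity radius $2-\sqrt3$ (\cite[Theorem 2.2.22]{graham}), and a direct check gives $P_n(2-\sqrt3)<0$, i.e.\ $r_{n,2}>2-\sqrt3$ for every $n\ge1$; hence Lemma~\ref{lemma3} alone yields injectivity only on $\mathbb D_{2-\sqrt3}$, leaving the interval $(2-\sqrt3,r_{n,2})$---on which $f$ is known only to be sense-preserving---uncovered. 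To bridge it I would try to establish the contraction $|\psi(w_1)-\psi(w_2)|<|w_1-w_2|$ directly on the possibly non-convex image $h(\mathbb D_r)$, exploiting the explicit form $\psi(w)=e^{i\theta}w\,(h^{-1}(w))^{n}$---for instance by replacing the straight segment with a path inside $h(\mathbb D_r)$ along which $|\psi'|$ stays controlled, or by upgrading the pointwise bound $|\psi'|<1$ to a global Lipschitz estimate via the factor $z^{n}$. Without such a refinement the quoted tools justify only ``$f$ sense-preserving on $\mathbb D_{r_{n,2}}$, injective on $\mathbb D_{2-\sqrt3}$,'' so reaching injectivity on the whole of $\mathbb D_{r_{n,2}}$ is the step that genuinely needs a new idea.
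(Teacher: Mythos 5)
Your derivation of the sense-preserving radius is exactly the paper's: the same growth bound $|h(z)|\le |z|(1+|z|)\,|h'(z)|/(1-|z|)$ for $h\in\mathcal{S}$, the same estimate for $|g'(z)/h'(z)|$, and the same polynomial $(n-1)r^{n+1}+(n+1)r^{n}+r-1$. (The paper states the sufficient condition with the inequality reversed, ``$\ge 0$''; your ``$<0$'' is the correct direction, so that is a typo on the paper's side.) Your sharpness argument via $h(z)=z/(1+z)^{2}$, $w(z)=z^{n}$, and the vanishing and sign change of the Jacobian at $z=r_{n,2}$ is correct and goes beyond the paper, which offers no sharpness proof for this corollary at all.

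The gap you flag at the end is genuine, and it is not one that the paper closes while you fail to: the paper's own proof of this corollary stops at ``$f$ is sense-preserving in $\mathbb{D}_{r_{n,2}}$'' and never returns to injectivity. As you observe, Lemma~\ref{lemma3} needs $h(\mathbb{D}_r)$ to be convex so that the segment $[w_1,w_2]$ lies in $\Omega=h(\mathbb{D}_r)$, and for a general $h\in\mathcal{S}$ this is guaranteed only for $r\le 2-\sqrt{3}\approx 0.268$, whereas already $r_{1,2}=1/3>2-\sqrt{3}$ and $r_{n,2}$ increases with $n$. Rescaling and invoking Lemma~\ref{lemma2} does not help either, since it only shrinks the radius further. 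Hence the quoted tools justify injectivity only on $\mathbb{D}_{2-\sqrt{3}}$, and the claim of injectivity on all of $\mathbb{D}_{r_{n,2}}$ is unsupported in the paper exactly as in your proposal. Your diagnosis that bridging the annulus $2-\sqrt{3}<|z|<r_{n,2}$ requires an argument not supplied by Lemma~\ref{lemma3} is the correct conclusion; what the corollary actually establishes with the given tools is the sense-preserving radius $r_{n,2}$ together with its sharpness, which your writeup proves in full.
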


\end{document}